\newtheorem{theorem}{Theorem}[section]
\newtheorem{lemma}[theorem]{Lemma}
\newtheorem{corollary}[theorem]{Corollary}
\theoremstyle{definition}
\newtheorem{definition}[theorem]{Definition}
\newtheorem{observation}[theorem]{Observation}
\newenvironment{highlight}[1]
  {\innerhighlight}
  {\endinnerhighlight}
\newcommand{\ran}{\operatorname{ran}}
\newcommand{\dom}{\operatorname{dom}}
\newcommand{\seq}[1]{\langle #1 \rangle}
\newcommand{\ZFC}{\mathsf{ZFC}}
\newcommand{\RCA}{\mathsf{RCA}}
\newcommand{\RT}{\mathsf{RT}}
\newcommand{\COH}{\mathsf{COH}}
\newcommand{\SRT}{\mathsf{SRT}}
\newcommand{\D}{\mathsf{D}}
\newcommand{\cred}{\leq_{\text{\upshape c}}}
\newcommand{\ncred}{\nleq_{\text{\upshape c}}}
\newcommand{\scred}{\leq_{\text{\upshape sc}}}
\newcommand{\nscred}{\nleq_{\text{\upshape sc}}}
\newcommand{\ured}{\leq_{\text{\upshape W}}}
\newcommand{\nured}{\nleq_{\text{\upshape W}}}
\newcommand{\sured}{\leq_{\text{\upshape sW}}}
\newcommand{\nsured}{\nleq_{\text{\upshape sW}}}
\newcommand{\trunc}[1]{{#1}^{\#}}
\newcommand{\LimSet}{I_d}
\definecolor{grey}{rgb}{0.95,0.95,0.95}
\definecolor{green}{rgb}{0.1,0.9,0.2}
\begin{document}

\title[Ramsey's theorem and strong computable reducibility]{Ramsey's theorem for singletons and strong computable reducibility}


\author[D.\ D.\ Dzhafarov]{Damir D. Dzhafarov}
\address{Department of Mathematics\\
University of Connecticut\\
Storrs, Connecticut U.S.A.}
\email{damir@math.uconn.edu}

\author[L.\ Patey]{Ludovic Patey}
\address{Laboratoire PPS\\
Université Paris Diderot\\
Paris, France}
\email{ludovic.patey@computability.fr}

\author[R.\ Solomon]{Reed Solomon}
\address{Department of Mathematics\\
University of Connecticut\\
Storrs, Connecticut U.S.A.}
\email{david.solomon@uconn.edu}

\author[L.\ B.\ Westrick]{Linda Brown Westrick}
\address{Department of Mathematics\\
University of Connecticut\\
Storrs, Connecticut U.S.A.}
\email{linda.westrick@uconn.edu}

\thanks{Dzhafarov was partially supported by NSF grant DMS-1400267. Patey was funded by the John Templeton Foundation (`Structure and Randomness in the Theory of Computation' project). The opinions expressed in this publication are those of the authors and do not necessarily reflect the views of the John Templeton Foundation. The authors are grateful to the anonymous referee for a number of helpful comments and suggestions.}

\maketitle

\begin{abstract}
	We answer a question posed by Hirschfeldt and Jockusch by showing that whenever $k > \ell$, Ramsey's theorem for singletons and $k$-colorings, $\RT^1_k$, is not strongly computably reducible to the stable Ramsey's theorem for $\ell$-colorings, $\SRT^2_\ell$. Our proof actually establishes the following considerably stronger fact: given $k > \ell$, there is a coloring $c : \omega \to k$ such that for every stable coloring $d : [\omega]^2 \to \ell$ (computable from $c$ or not), there is an infinite homogeneous set $H$ for $d$ that computes no infinite homogeneous set for $c$. This also answers a separate question of Dzhafarov, as it follows that the cohesive principle, $\COH$, is not strongly computably reducible to the stable Ramsey's theorem for all colorings, $\SRT^2_{<\infty}$. The latter is the strongest partial result to date in the direction of giving a negative answer to the longstanding open question of whether $\COH$ is implied by the stable Ramsey's theorem in $\omega$-models of $\RCA_0$.
\end{abstract}

\section{Introduction}

In this paper, we answer several questions pertaining to the logical content of Ramsey's theorem, and thus we open by recalling the statement of this principle. For an infinite set $X$ and $n \geq 1$, let $[X]^n$ denote the set of all tuples $\seq{x_0,\ldots,x_{n-1}} \in X^n$ with $x_0 < \cdots < x_{n-1}$. For $k \geq 1$, a \emph{$k$-coloring} of $[X]^n$ is a map $c : [X]^n \to \{0,\ldots,k-1\}$, which we abbreviate as $c : [X]^n \to k$. For $\seq{x_0,\ldots,x_{n-1}} \in [X]^n$, we write $c(x_0,\ldots,x_{n-1})$ instead of $c(\seq{x_0,\ldots,x_{n-1}})$. A set $Y \subseteq X$ is \emph{homogeneous} for $c$ if there is an $i < k$ such that $c(y_0,\ldots,y_{n-1}) = i$ for all $\seq{y_0,\ldots,y_{n-1}} \in [Y]^n$. In this case, we say also that $Y$ is homogeneous \emph{with color $i$}. In other words, $Y$ is homogeneous for $c$ if $[Y]^n$ is monochromatic for $c$.

\begin{highlight}{Ramsey's theorem for $n$-tuples and $k$-colorings ($\RT^n_k$)}
	For every coloring $c : [\omega]^n \to k$, there is an infinite set $H$ which is homogeneous for $c$.
\end{highlight}

\noindent Understanding the effective and proof-theoretic content of $\RT^n_k$, for various values of $n$ and $k$, has long been a major driving force of research in computability theory, reverse mathematics, and their intersection. The traditional approach in computability theory has been to measure the complexity of homogeneous sets of computable colorings, in terms of Turing degrees and the various inductive hierarchies involving them. In reverse mathematics, the relationship of Ramsey's theorem to various other principles, from combinatorics as well as other areas, has been investigated, most commonly in the sense of which principles are implied by Ramsey's theorem, and which imply it, over the weak subsystem $\RCA_0$ of second-order arithmetic. We refer the reader to Soare~\cite{Soare-TA} and Simpson~\cite{Simpson-2009} for general background on computability and reverse mathematics, respectively, and to Hirschfeldt~\cite[Section 6]{Hirschfeldt-2014} for a comprehensive survey of results about Ramsey's theorem specifically. We refer to Shore \cite[Chapter 3]{Shore-TA} for background on forcing in arithmetic.

\subsection{Notions of computability-theoretic reduction}

As is well-known, there is a natural interplay between the two endeavors described above, with each of the benchmark subsystems of second-order arithmetic broadly corresponding to a particular level of computability-theoretic complexity (see, e.g.,~\cite[Section 1]{HS-2007} for details). But more is true. The majority of principles one considers in reverse mathematics, like Ramsey's theorem, have the syntactic form
\[
	\forall X\, (\Phi(X) \to \exists Y \, \Psi(X,Y)),
\]
where $\Phi$ and $\Psi$ are arithmetical predicates. It is common to call such a principle a \emph{problem}, and to call each $X$ such that $\Phi(X)$ holds an \emph{instance} of this problem, and each $Y$ such that $\Psi(X,Y)$ holds a \emph{solution} to $X$. The instances of $\RT^n_k$ are thus the colorings $c : [\omega]^n \to k$, and the solutions to any such $c$ are the infinite homogeneous sets for this coloring. While an implication over $\RCA_0$ between problems, say $\mathsf{Q} \to \mathsf{P}$, can in principle make multiple applications of the antecedent $\mathsf{Q}$, or split into cases in a non-uniform way, and generally be quite complicated, in practice, most implications have a considerably simpler shape. Define the following notions of reduction between problems.

\begin{definition}\label{def:reds}
	Let $\mathsf{P}$ and $\mathsf{Q}$ be problems.	
	\begin{enumerate}
		\item $\mathsf{P}$ is \emph{computably reducible} to $\mathsf{Q}$, written $\mathsf{P} \cred \mathsf{Q}$, if every instance $X$ of $\mathsf{P}$ computes an instance $\widehat{X}$ of $\mathsf{Q}$, such that if $\widehat{Y}$ is any solution to $\widehat{X}$ then there is a solution $Y$ to $X$ computable from $X \oplus \widehat{Y}$.
		\item $\mathsf{P}$ is \emph{strongly computably reducible} to $\mathsf{Q}$, written $\mathsf{P} \scred \mathsf{Q}$, if every instance $X$ of $\mathsf{P}$ computes an instance $\widehat{X}$ of $\mathsf{Q}$, such that if $\widehat{Y}$ is any solution to $\widehat{X}$ then there is a solution $Y$ to $X$ computable from $\widehat{Y}$.
		\item $\mathsf{P}$ is \emph{Weihrauch reducible} to $\mathsf{Q}$, written $\mathsf{P} \ured \mathsf{Q}$, if there are Turing functionals $\Phi$ and $\Delta$ such that if $X$ is any instance of $\mathsf{P}$ then $\Phi^X$ is an instance of $\mathsf{Q}$, and if $\widehat{Y}$ is any solution to $\mathsf{Q}$ then $\Delta^{X \oplus \widehat{Y}}$ is a solution to $X$.
		\item $\mathsf{P}$ is \emph{strongly Weihrauch reducible} to $\mathsf{Q}$, written $\mathsf{P} \sured \mathsf{Q}$, if there are Turing functionals $\Phi$ and $\Delta$ such that if $X$ is any instance of $\mathsf{P}$ then $\Phi^X$ is an instance of $\mathsf{Q}$, and if $\widehat{Y}$ is any solution to $\mathsf{Q}$ then $\Delta^{\widehat{Y}}$ is a solution to $X$.
	\end{enumerate}
\end{definition}
\noindent All of these reductions express the idea of taking a problem, $\mathsf{P}$, and computably (even uniformly computably, in the case of $\ured$ and $\sured$) transforming it into another problem, $\mathsf{Q}$, in such a way that being able to solve the latter computably (uniformly computably) tells us how to solve the former. This is a natural idea, and indeed, more often than not an implication $\mathsf{Q} \to \mathsf{P}$ over $\RCA_0$ (or at least, over $\omega$-models of $\RCA_0$) is a formalization of some such reduction. The strong versions above may appear more contrived, since it does not seem reasonable to deliberately bar access to the instance of the problem one is working with. Yet commonly, in a reduction of the above sort, the ``backward'' computation from $\widehat{Y}$ to $Y$ turns out not to reference the original instance. Frequently, it is just the identity.

Let $\mathsf{P} \leq_\omega \mathsf{Q}$ denote that every $\omega$-model of $\mathsf{Q}$ is a model of $\mathsf{P}$. It is easy to see that the following implications hold:
\[
\xymatrix{
& \mathsf{P} \ured \mathsf{Q} \ar@2[dr]\\
\mathsf{P} \sured \mathsf{Q} \ar@2[ur] \ar@2[dr] & & \mathsf{P} \cred \mathsf{Q} \ar@2[r] & \mathsf{P} \leq_\omega \mathsf{Q}.\\
& \mathsf{P} \scred \mathsf{Q} \ar@2[ur]
}
\]
No additional arrows can be added to this diagram (see~\cite[Section 1]{HJ-TA}). The notions of computable reducibility and strong computable reducibility were implicitly used in many papers on reverse mathematics, but were first isolated and studied for their own sake by Dzhafarov~\cite{Dzhafarov-2015}, and also form the basis of the iterated forcing constructions of Lerman, Solomon, and Towsner \cite{LST-2013}. Weihrauch reducibility (also called \emph{uniform reducibility}) and strong Weihrauch reducibility were introduced by Weihrauch~\cite{Weihrauch-1992}, under a different formulation than given above, and have been widely applied in the study of computable analysis. Later, these were independently rediscovered by Dorais, Dzhafarov, Hirst, Mileti, and Shafer~\cite{DDHMS-2016}, and shown to be the uniform versions of computable reducibility and strong computable reducibility, respectively (see~\cite[Appendix A]{DDHMS-2016}).

The investigation of these notions has seen a recent surge of interest. (An updated bibliography is maintained by Brattka~\cite{Brattka-bib}.) Collectively, they provide a way of refining the analyses of effective and reverse mathematics, by elucidating subtler points of similarity and difference between various principles. In the case of Ramsey's theorem, one starting point of interest was in the number of colors. Over $\RCA_0$, the principle $\RT^n_k$ is equivalent to $\RT^n_\ell$ for all $k > \ell$, but the usual proof that $\RT^n_k$ is implied by $\RT^n_\ell$ uses $\RT^n_\ell$ multiple times, and as such does not fit any of the notions in Definition~\ref{def:reds}. Dorais et al.~\cite[Theorem 3.1]{DDHMS-2016} showed that if $k > \ell$ then $\RT^n_k \nsured \RT^n_\ell$, and this was subsequently improved by Hirschfeldt and Jockusch~\cite[Theorem 3.3]{HJ-TA} and Rakotoniaina \cite{Rakotoniaina-2015} (see \cite[Theorem 4.21]{BR-TA}) to show that also $\RT^n_k \nured \RT^n_\ell$. Thus, the proof of $\RT^n_k$ from $\RT^n_\ell$ is essentially non-uniform. Surprisingly, Patey~\cite[Corollary 3.15]{Patey-TA} showed that even $\RT^n_k \ncred \RT^n_\ell$. Thus, under any of the above reducibilities, what was basically a single principle in the classical framework is separated into infinitely many.

\subsection{Ramsey's theorem for singletons}

Our interest in this paper is in the principle $\RT^1_k$, and specifically, how it relates to the stable Ramsey's theorem and the cohesive principle. We begin with the former. A coloring $c : [\omega]^2 \to k$ is \emph{stable} if for every $x \in \omega$ there is an $s > x$ and an $i < k$ such that for all $y \geq s$, $c(x,y) = i$. In other words, the color of $c(x,y)$ is constant for all sufficiently large $y$. In this case, we write $\lim_y c(x,y) = i$.

\begin{highlight}{Stable Ramsey's theorem for $k$-colorings ($\SRT^2_k$)}
	For every stable coloring $c : [\omega]^2 \to k$, there is an infinite set $H$ which is homogeneous for $c$.	
\end{highlight}

\noindent It is convenient to define a set $X$ to be \emph{limit homogeneous} for a stable coloring $c : [\omega]^2 \to k$ if for some $i < k$, we have $\lim_y c(x,y) = i$ for all $x \in X$. In this case, we say $X$ is limit homogeneous \emph{with color $i$}. Every infinite homogeneous set is limit homogeneous (with the same color), but not conversely. Note that if $F$ is finite and homogeneous for $c$ with color $i$, then $F$ is extendible to an infinite homogeneous set if and only if it is also limit homogeneous with color $i$ and there are infinitely many $x$ such that $\lim_y d(x,y) = i$.

One can think of an instance $c : [\omega]^2 \to k$ of $\SRT^2_k$ as an instance $d : \omega \to k$ of $\RT^1_k$ defined by $d(x) = \lim_y c(x,y)$. And from any solution $X$ to $d$, one can thin out to obtain a solution $H$ to $c$, and $H \leq_T c \oplus X$. Of course, $d$ is not computable from $c$, merely from the Turing jump of $c$. Thus, while $\SRT^2_k$ is not computably reducible to $\RT^1_k$, it is computably equivalent to a kind of $\Delta^0_2$ version of $\RT^1_k$ called $\D^2_k$, which we will not discuss here. (See~\cite{CLY-2010} and~\cite[Section 3]{Dzhafarov-TA} for thorough explorations of how these principles are related.)

The second principle we will look at is the cohesive principle. A set $Y$ is \emph{cohesive} for a sequence $\seq{X_n : n \in \omega}$ of subsets of $\omega$ if for each $n$, either $Y \cap X_n$ or $Y \cap \overline{X_n}$ is finite.

\begin{highlight}{Cohesive principle ($\COH$)}
	For every sequence $\seq{X_n : n \in \omega}$ of subsets of $\omega$, there is an infinite set $Y$ which is cohesive for this sequence.
\end{highlight}

\noindent $\COH$, too, may be thought of in terms of $\RT^1_k$, namely, as a sequential form of $\RT^1_k$ with finite errors. To make this precise, define a set $Y$ to be \emph{almost homogeneous} for a coloring $c : \omega \to k$ if $Y - F$ is homogeneous for $c$ for some finite set $F$.

\begin{lemma}\label{lem:cohequiv}
	The following statements are equivalent under $\sured$.
	\begin{enumerate}
		\item $\COH$.
		\item For every sequence $\seq{c_k : k \geq 1}$ of colorings $c_k : \omega \to k+1$, there is an infinite set $Y$ which is almost homogeneous for each $c_k$.
	\end{enumerate}
\end{lemma}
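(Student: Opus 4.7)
The plan is to prove both implications under $\sured$ using only the identity for the backward functional $\Delta$, so in each direction the entire content lies in a one-step computable transformation of the instance. The key observation driving everything is that a set $Y$ is almost homogeneous for a $2$-coloring $c = \chi_A$ precisely when either $Y \cap A$ or $Y \cap \overline{A}$ is finite, i.e.\ when $Y$ satisfies the cohesiveness condition relative to $A$. Once this is in hand, both directions amount to repackaging a sequence of bi-partitions as a sequence of colorings, or vice versa, and reading off the two definitions.

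For the direction (1) $\sured$ (2), I would take an instance $\langle X_n : n \in \omega\rangle$ of $\COH$ and map it to the sequence of colorings $c_k : \omega \to k+1$ defined by $c_k(x) = \chi_{X_{k-1}}(x)$, interpreted as taking values in $\{0,1\} \subseteq \{0,\ldots,k\}$. This is uniformly computable from the $X_n$'s. If $Y$ is almost homogeneous for each $c_k$, then for each $n$ there is a finite $F$ with $Y \setminus F$ contained in either $X_n$ or $\overline{X_n}$, so one of $Y \cap X_n$, $Y \cap \overline{X_n}$ is finite; hence $Y$ itself is a $\COH$-solution for $\langle X_n\rangle$.

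For the direction (2) $\sured$ (1), I would take an instance $\langle c_k : k \geq 1\rangle$ and form the sequence of sets $X_{\langle k,i\rangle} = c_k^{-1}(i)$, for $k \geq 1$ and $0 \leq i \leq k$, uniformly computably. Given a set $Y$ cohesive for this sequence, I would fix $k$ and use that the $c_k^{-1}(i)$ partition $\omega$: since $Y$ is infinite and for each $i \leq k$ either $Y \cap c_k^{-1}(i)$ or its complement in $Y$ is finite, exactly one $i^\ast \leq k$ has $Y \cap c_k^{-1}(i^\ast)$ infinite, and then $Y \setminus c_k^{-1}(i^\ast) = \bigcup_{i \neq i^\ast} Y \cap c_k^{-1}(i)$ is a finite union of finite sets, so $Y$ is almost homogeneous for $c_k$ with color $i^\ast$. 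Again the recovered solution is just $Y$ itself.

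I do not expect a real obstacle here; the only point to verify is the harmless convention that $c_k : \omega \to k+1$ allows $c_k$ to attain fewer than $k+1$ values, which is what lets the first reduction produce a $2$-valued $c_k$ inside the codomain $\{0,\ldots,k\}$.
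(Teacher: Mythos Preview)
Your proposal is correct; both reductions work exactly as you describe, with the identity as the backward functional.

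The direction $(1) \sured (2)$ matches the paper's one-line argument. In the direction $(2) \sured (1)$, however, you take a different route. You form one set $X_{\langle k,i\rangle} = c_k^{-1}(i)$ for each color class, and then use the pigeonhole observation that a cohesive $Y$, being infinite, must meet some color class $c_k^{-1}(i^\ast)$ infinitely, forcing $Y \setminus c_k^{-1}(i^\ast)$ to be finite. The paper instead encodes $c_k(x)$ as a binary string of length $\lceil \log_2(k+1)\rceil$ and uses one set $X_n$ per bit position, so that cohesiveness stabilizes all bits of $c_k(x)$ simultaneously. Your argument is more elementary and avoids the encoding machinery entirely; the paper's version is more parsimonious in how many sets it associates to each $c_k$, but since both constructions yield countable sequences this economy buys nothing for the lemma as stated.
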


\begin{proof}
	(Statement 2 $\sured$~$\COH$.) Fix a sequence $\seq{c_k : k \geq 1}$ of colorings $c_k : \omega \to k+1$. We define a sequence of sets $\seq{X_n : n \in \omega}$ as follows. Partition $\omega$ into adjacent finite intervals $B_1, B_2, \ldots$, with $|B_k| = \lceil \log_2 (k+1) \rceil$. Fix $k$, and suppose $B_k = \{n_0 < \ldots < n_{\lceil \log_2 (k+1) \rceil - 1}\}$; we define $X_{n_0}(x),\ldots,X_{n_{\lceil \log_2 (k+1) \rceil-1}}(x)$ for each $x \in \omega$. Since $c_k(x) \leq k$, it consists of at most $\lceil \log_2 (k+1) \rceil$ digits when written in binary, and so by prepending $0$s if necessary, we can encode $c_k(x)$ as a binary sequence $\sigma_{k,x}$ of length $\lceil \log_2 (k+1) \rceil$. For instance, if $c_5(2) = 3$ then $\sigma_{5,2} = \seq{011}$, and if $c_{10}(3) = 2$ then $\sigma_{10,3} = \seq{0010}$. We define $X_{n_j}(x) = \sigma_{k,x}(j)$ for each $j < \lceil \log_2 (k+1) \rceil$. Now if $Y$ is an infinite cohesive set for $\seq{X_n : n \in \omega}$, then for all sufficiently large $x$ the finite binary sequence $\seq{X_{n_0}(x),\ldots,X_{n_{\lceil \log_2 (k+1) \rceil-1}}(x)}$ is the same, and hence also $c_k(x)$ is the same. Thus, $Y$ is almost homogeneous for $c_k$.
	
	($\COH$ $\sured$~Statement 2.)  This is clear, by identifying sets with their  characteristic functions. \qedhere
\end{proof}

\noindent Since $\sured$ is the strongest of the reducibilities we are discussing, it follows that for our purposes, $\COH$ can be used interchangeably with Statement 2.

\subsection{Main theorems}

Clearly, for all $k$ we have that $\RT^1_k \sured \SRT^2_k$. Let $\RT^1_{<\infty}$ be the problem whose instances are colorings $c : \omega \to k$, for all $k \geq 1$, and solutions are, as before, infinite homogeneous sets. As a statement of second-order arithmetic, this corresponds to the statement $\forall k \geq 1~\RT^1_k$. Hirst~\cite[Theorem 6.8]{Hirst-1987} proved that $\RT^2_2 \to \RT^1_{<\infty}$ over $\RCA_0$, and his proof actually shows that $\RT^1_{<\infty} \sured \RT^2_2$. A modification of this proof shows also that $\SRT^2_2 \to \RT^1_{<\infty}$ over $\RCA_0$, but in terms of computability-theoretic reducibilities, it yields only that $\RT^1_{<\infty} \cred \SRT^2_2$. In fact, Hirschfeldt and Jockusch~\cite[Theorem 2.10 (4)]{HJ-TA} showed that for all $k$, $\RT^1_{k+1} \nured \SRT^2_k$. This leaves a gap around strong computable reducibility. More generally, it was asked in~\cite[Question 5.4]{HJ-TA} whether there exist $k > \ell$ such that $\RT^1_k \scred \SRT^2_\ell$. Our main result in this paper is a negative answer to this question. In fact, we will prove the following considerably stronger fact.

\begin{theorem}\label{thm:kl}
	If $k > \ell$, there exists a coloring $c : \omega \to k$ such that for every stable coloring $d : [\omega]^2 \to \ell$ (computable from $c$ or not), there is an infinite homogeneous set $H$ for $d$ such that $H$ computes no infinite homogeneous set for $c$.
\end{theorem}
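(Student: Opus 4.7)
The plan is to construct the coloring $c : \omega \to k$ once and for all by a finite-extension forcing, and then, for each stable $d : [\omega]^2 \to \ell$, to build an infinite $d$-homogeneous set $H$ by a Mathias-style forcing that uses $d$ as an oracle and diagonalizes against all Turing functionals $\Gamma_e$, invoking the richness of $c$ to carry out each diagonalization.

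First I would set up the construction of $H$ given $d$. Since $d$ is stable, let $d^*(x) = \lim_y d(x, y)$, and fix by pigeonhole a color $i < \ell$ such that $L_i := (d^*)^{-1}(i)$ is infinite. I would then force with conditions $(\sigma, X)$, where $\sigma$ is a finite $d$-homogeneous set of color $i$, $X \subseteq L_i$ is infinite, $\max \sigma < \min X$, and every element of $X$ is past the stability threshold of every element of $\sigma$ (so $d(x, y) = i$ for all $x \in \sigma$ and $y \in X$). An extension adds elements of $X$ to $\sigma$ and shrinks $X$ accordingly; the generic filter yields an infinite $d$-homogeneous $H$ of color $i$. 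The requirements are
\[
    R_e : \Gamma_e^H \text{ is not an infinite } c\text{-homogeneous set.}
\]
For each $R_e$ and each condition $(\sigma, X)$, the dichotomy I would try to establish is: either (a) some extension $(\sigma', X')$ forces $\Gamma_e^H$ to be finite (by restricting $X$ so no element will ever be enumerated into $\Gamma_e^H$), or (b) I can find an extension $\sigma'' \supseteq \sigma$ with $\sigma'' \setminus \sigma \subseteq X$ such that $\Gamma_e^{\sigma''}$ contains two elements of distinct $c$-colors, which permanently kills $c$-homogeneity of $\Gamma_e^H$.

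The construction of $c$ is designed precisely so that (b) is available whenever (a) fails. I would build $c$ by finite-extension forcing over $k^{<\omega}$, ensuring the following richness property: for every $e$ and every $\Sigma^{0,Z}_1$ description of a Mathias-style tree of finite sets (where $Z$ ranges over potential oracles arising from limit colorings $d^*$), if $\Gamma_e$ enumerates unboundedly many elements along the tree, then some node of the tree produces a $\Gamma_e$-value with at least two distinct $c$-colors. Because during the finite stage of building $c$ only finitely many colorings and only a small finite portion of $c$ are constrained, and because we have $k > \ell$ colors available to play with against the $\ell$-many possible limit classes of any $d^*$, the pigeonhole gap $k - \ell \geq 1$ leaves at least one ``spare'' color that can always be injected into $c$ to break a would-be monochromatic $\Gamma_e^H$-enumeration. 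This is the precise point where the hypothesis $k > \ell$ is used.

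The main obstacle is that the Mathias reservoirs $X$ live in arbitrary (uncountably many) stable colorings $d$, so one cannot literally enumerate all conditions in the construction of $c$. I expect the resolution is to phrase the richness requirement uniformly in terms of the pair $(e, j)$ with $e$ a functional index and $j$ a potential $d^*$-color class ``slot'' (of which there are only $\ell$), together with finite parameters from the current condition, so that the requirements are in fact countable. A combinatorial lemma — probably the crux of the paper — would then show that these uniform requirements suffice to guarantee (b), for every $d$ simultaneously, whenever (a) fails. Once that lemma is in hand, the Mathias construction of $H$ relative to $d$ proceeds routinely: meet each $R_e$ by taking the appropriate (a)- or (b)-extension, and let $H$ be the union of the $\sigma$'s.
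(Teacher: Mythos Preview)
Your high-level architecture (build $c$ by forcing, then for each $d$ build $H$ by Mathias forcing) matches the paper, but several load-bearing pieces are missing or misidentified.

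\textbf{The choice of the $d$-color is not free.} You fix an arbitrary $j < \ell$ with $(d^*)^{-1}(j)$ infinite and build $H$ homogeneous of color $j$. The paper does not do this, and for good reason: the diagonalization requires that the Mathias reservoir interact with $c$ in a specific way. Concretely, the paper first finds, for each $i < k$, a color $j_i < \ell$ such that a suitable reservoir $I_d$ has the property that every ``nice'' injection $f$ into $I_d$ hits a point $f(w)$ with $c(w) = i$ and $\lim_y d(f(w),y) = j_i$. Only then is pigeonhole applied: since $k > \ell$, two distinct $c$-colors $i_0, i_1$ share the same $j_{i_0} = j_{i_1} = j$, and \emph{this} $j$ is the one used for $H$. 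This is the actual role of $k > \ell$; your ``spare color to inject into $c$'' picture does not match what happens, and with an arbitrary $j$ there is no reason the diagonalization can be carried out.

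\textbf{The uncountability obstacle is not resolved.} You correctly flag that the Mathias conditions depend on $d$, of which there are continuum many, and propose to make the $c$-side requirements countable by indexing them by $(e, j)$ plus ``finite parameters from the current condition.'' But those conditions themselves depend on $d$, so this does not yield a countable list. The paper's solution is structural: take $c$ Cohen-generic over a fixed countable transitive model $M$ of $\ZFC$, and restrict all Mathias reservoirs to be infinite sets in $M$. Then every auxiliary object produced during the diagonalization (the trees $T(n,\Delta,E,I)$, their rows, the set of potential labels) lies in $M$, and ordinary Cohen genericity of $c$ over $M$ is exactly the ``richness'' you need. No bespoke list of requirements is built into $c$.

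\textbf{The combinatorial core is absent.} Your clause (b) asks for an extension $\sigma'' \supseteq \sigma$ inside $X$ making $\Gamma_e^{\sigma''}$ hit two $c$-colors. Obtaining such a $\sigma''$ that is simultaneously $d$-homogeneous of color $j$ and forces $\Gamma_e$ to output an element of a \emph{prescribed} $c$-color is the entire difficulty, and the paper's answer is the tree labeling method: one analyzes the well-founded tree of attempts to extend $E$ inside $I$ without enumerating a new element, labels terminal nodes by the witness $w$ they produce, propagates labels upward, and uses genericity of $c$ together with the defining property of $I_d$ to locate a branch whose label $w$ satisfies $c(w) = i$ while the branch itself stays limit-homogeneous of color $j$. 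Your proposal acknowledges that ``a combinatorial lemma --- probably the crux of the paper'' is needed here but supplies no candidate mechanism; this is precisely the step that required a new technique.
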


\begin{corollary}
	If $k > \ell$, then $\RT^1_k \nscred \SRT^2_\ell$.	
\end{corollary}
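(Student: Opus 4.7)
My plan is a two-stage construction: first build $c : \omega \to k$ with a strong combinatorial immunity property; then, given any stable $d$, construct $H$ by a Mathias-style forcing that exploits the pigeonhole imbalance $k > \ell$. The underlying observation is a pigeonhole: if each $c^{-1}(j)$ is infinite, then since the $k$ color classes of $c$ must each lie almost entirely within some $\tilde{d}^{-1}(i_j)$ with $i_j < \ell$, the map $j \mapsto i_j$ cannot be injective, so two distinct classes $c^{-1}(j_1), c^{-1}(j_2)$ share a common $\tilde{d}$-color $i$, and $\tilde{d}^{-1}(i)$ intersects both in infinite sets.

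First I would construct $c$ by a finite-extension diagonalization (or by choosing $c$ sufficiently random in $k^\omega$) enforcing the following immunity requirement: for every pair of Turing indices $(e_1, e_2)$ and every oracle $Z$, if $\Psi_{e_1}^Z$ is infinite, then the ``shadow''
\[
W_{e_1, e_2}^Z \;=\; \{\, n : \exists F \subseteq_{\text{fin}} \Psi_{e_1}^Z,\ \Phi_{e_2}^F(n){\downarrow} = 1\,\}
\]
is either finite or meets at least two distinct $c$-color classes. For each fixed $Z$ this is a countable family of requirements; a Cohen-style (or random) construction can meet them by coloring a fresh witness with a new color whenever the shadow threatens to grow large and monochromatic.

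Second, given stable $d:[\omega]^2\to\ell$ with limit coloring $\tilde{d}$, I pick the pigeonhole color $i$ and run Mathias forcing with conditions $(F, X)$: $F \subseteq \tilde{d}^{-1}(i)$ finite and $d$-homogeneous of color $i$; $X \subseteq \tilde{d}^{-1}(i)$ infinite and pre-homogeneous for $d$ (so $d(x, y) = i$ for all $x < y$ in $X$ and $d(f, x) = i$ for all $f \in F$, $x \in X$). The generic $H$ is then infinite and $d$-homogeneous of color $i$. For each functional $\Phi$, I extend the current condition via the standard dichotomy: either some valid extension $F' \supseteq F$ within $X$ already has $\Phi^{F'}$ containing two elements of distinct $c$-colors (in which case $\Phi^H$ will not be $c$-homogeneous, and we commit to this extension); or no such extension exists, meaning the total shadow of $\Phi$-outputs over all valid extensions lies within a single $c^{-1}(j)$. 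In the latter case, instantiating the immunity of $c$ with $Z = d$ and $\Psi_{e_1}$ coding the current $X$, the shadow must be finite, so I can shrink $X$ to exclude all its witnesses, forcing $\Phi^H = \Phi^F$ to be finite.

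The main obstacle is the uniformity of the immunity of $c$ in the oracle $Z$. Since $d$ can have arbitrary complexity and may even compute $c$, there is no single level of relative randomness for $c$ that handles all $d$ uniformly -- demanding $c$ be random relative to every oracle is impossible. The technical crux will be to arrange the construction so that immunity holds across all oracles actually arising as $d$, likely by splitting into cases (e.g., whether $d \geq_T c$) and handling each case with a tailored variant of the forcing, together with a careful tracking of the complexity of Mathias reservoirs so that each is captured by a fixed Turing index relative to $d$.
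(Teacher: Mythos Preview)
Your proposal correctly identifies the pigeonhole and the overall Mathias-forcing strategy, but the immunity you want $c$ to have is unachievable, and you yourself flag this as ``the main obstacle'' without resolving it. Concretely: if the immunity must hold for \emph{every} oracle $Z$, take $Z = c$, let $\Psi_{e_1}^Z = c^{-1}(0)$, and let $\Phi_{e_2}$ be the identity functional (so $\Phi_{e_2}^F(n)\downarrow = 1$ exactly when $n \in F$). Then the shadow $W^Z_{e_1,e_2} = c^{-1}(0)$ is infinite and monochromatic, violating immunity. Your suggested case-split on whether $d \geq_T c$ does not help either: the Mathias reservoirs $X$ you produce depend on earlier stages of the forcing, which in turn consult $c$ (to locate extensions hitting two $c$-colors), so $X$ need not be computable from $d$ alone, and you therefore cannot instantiate the immunity with $Z = d$ and $\Psi_{e_1}^d = X$.

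The paper's resolution of exactly this obstacle is to replace computable immunity with \emph{set-theoretic genericity}: take $c$ to be Cohen generic over a fixed countable transitive model $M$ of $\ZFC$, and restrict all Mathias reservoirs to the collection $\mathcal{I}$ of infinite sets lying in $M$. Because every tree $T(n,\Delta,E,I)$ with $I \in \mathcal{I}$ belongs to $M$, so do its rows and labeled rows; the genericity of $c$ over $M$ therefore applies to them even though $d$ is completely arbitrary (and may well compute $c$). The pigeonhole then appears as a density lemma for $\mathcal{I}$ (Lemma~\ref{lem:avoid-color}), and the diagonalization is carried out via the tree labeling method of Section~\ref{sec:proof}, which handles the well-founded case by descending through a labeled subtree whose rows remain in $M$. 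This is the idea your proposal is missing: the reservoirs must be confined to a universe over which $c$ is generic, rather than merely computable from some oracle fixed in advance.
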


\noindent Theorem \ref{thm:kl} can be viewed as saying that if $k > \ell$, then not only does $\RT^1_k$ not follow from $\SRT^2_\ell$ by any natural argument, of the kind encapsulated by the reductions of Definition \ref{def:reds}, but also there is a true combinatorial, rather than merely computability-theoretic, difference between the two. A similar ``combinatorial non-reduction'' was exhibited by Hirschfeldt and Jockusch \cite[Theorem 3.9]{HJ-TA} and Patey \cite[Corollary 3.4]{Patey-TA}, who constructed a coloring $c : \omega \to k$ such that for every stable coloring $d : [\omega]^2 \to \ell$, there is an infinite \emph{limit} homogeneous set $L$ for $d$ such that $L$ computes no infinite homogeneous set for $c$. Theorem \ref{thm:kl} is an extension of this fact, though the proof is not: the move from limit homogeneous sets to fully homogeneous ones in our case requires an entirely different set of techniques.

Theorem \ref{thm:kl} has an application to the study of the relative strength of the stable Ramsey's theorem and $\COH$. An important connection between these principles, due to Cholak, Jockusch, and Slaman~\cite[Section 3]{CJS-2001}, is that $\RT^2_k$ is equivalent to $\SRT^2_k + \COH$ over $\RCA_0$ (see~\cite[Corollary A.1.4]{Mileti-2004}). Only recently has the question of whether $\SRT^2_k$ implies $\COH$ been answered (in the negative), by Chong, Slaman, and Yang~\cite{CSY-TA}, but it remains open whether $\COH \leq_\omega \SRT^2_k$, and even whether $\COH \cred \SRT^2_k$. As a partial step towards a negative answer, Dzhafarov~\cite[Corollary 5.3]{Dzhafarov-TA} proved that $\COH \nscred \SRT^2_2$. In turn, it was asked in~\cite[Question 6.3]{Dzhafarov-TA} whether the same holds for $\SRT^2_k$ for $k > 2$. We give an affirmative answer to this question, again in a stronger form.

\begin{corollary}\label{cor:generalcoh}
	There is a family $\seq{c_k : k \geq 1}$ of colorings $c_k : \omega \to k+1$ such that for every stable coloring $d : [\omega]^2 \to \ell$ (computable from this family or not), there is an infinite homogeneous set $H$ for $d$ such that for some $k \geq 1$, $H$ computes no almost homogeneous set for $c_k$.
\end{corollary}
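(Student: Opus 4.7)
The plan is to derive the corollary directly from Theorem~\ref{thm:kl}, using only the observation that an infinite almost-homogeneous set contains an infinite homogeneous subset computable from it.

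For each $k \geq 1$, apply Theorem~\ref{thm:kl} with the pair $(k+1, k)$ in place of $(k,\ell)$ to obtain a coloring $c_k : \omega \to k+1$ such that every stable coloring $d : [\omega]^2 \to k$ admits an infinite $d$-homogeneous set computing no infinite $c_k$-homogeneous set. I take $\seq{c_k : k \geq 1}$ to be the desired family. Now given any stable $d : [\omega]^2 \to \ell$, I set $k = \ell$ and invoke the defining property of $c_\ell$ to obtain an infinite $d$-homogeneous set $H$ that computes no infinite $c_\ell$-homogeneous set.

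It remains to verify that $H$ also computes no infinite almost $c_\ell$-homogeneous set, which will establish the corollary with the witness $k = \ell$. Suppose $Y \leq_T H$ is infinite with $Y - F$ homogeneous for $c_\ell$ for some finite $F$. Then $Y - F$ is infinite (since $Y$ is and $F$ is not) and homogeneous for $c_\ell$; and since every finite set is computable, $Y - F \leq_T Y \leq_T H$, so $H$ would compute an infinite $c_\ell$-homogeneous set, contradicting the choice of $H$. I do not expect any real obstacle: the combinatorial content lies entirely in Theorem~\ref{thm:kl}, and the present argument is only the bookkeeping needed to rephrase its conclusion in the ``almost homogeneous'' language, which by Lemma~\ref{lem:cohequiv} is the form relevant to $\COH$.
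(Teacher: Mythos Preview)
Your proof is correct and follows essentially the same approach as the paper: apply Theorem~\ref{thm:kl} with parameters $(k+1,k)$ to build each $c_k$, and for a given $d:[\omega]^2\to\ell$ use $c_\ell$ as the witnessing coloring. The only difference is that you spell out explicitly why ``computes no infinite homogeneous set for $c_\ell$'' upgrades to ``computes no infinite almost homogeneous set for $c_\ell$,'' whereas the paper leaves this as an ``in particular.''
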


\begin{proof}
	By Theorem \ref{thm:kl}, for each $k \geq 1$ there is a coloring $c_k : \omega \to k+1$ such that for every stable $d : [\omega]^2 \to k$, there is an infinite homogeneous set $H$ for $d$ such that $H$ computes no infinite homogeneous set for $c_k$. Then, in particular, $H$ also computes no almost homogeneous set for $c_k$. Thus, the family $\seq{c_k : k \geq 1}$ is as desired.
\end{proof}

\begin{corollary}\label{cor:maincoh}
	$\COH \nscred \SRT^2_{<\infty}$.
\end{corollary}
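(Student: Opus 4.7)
The plan is to derive Corollary \ref{cor:maincoh} almost immediately from Corollary \ref{cor:generalcoh}, using Lemma \ref{lem:cohequiv} as the bridge between $\COH$ and its sequential ``almost homogeneous'' reformulation (Statement 2). All the combinatorial content is already in Corollary \ref{cor:generalcoh}; what remains is just a bookkeeping argument about reducibilities.

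First I would suppose, toward a contradiction, that $\COH \scred \SRT^2_{<\infty}$. Because $\sured$ implies $\scred$, and because strong computable reducibility composes, Lemma \ref{lem:cohequiv} then gives Statement 2 $\scred \SRT^2_{<\infty}$. Unpacking the definition of $\scred$, this means that to every instance of Statement 2, namely every family $\seq{c_k : k \geq 1}$ of colorings $c_k : \omega \to k+1$, there is associated a stable coloring $d : [\omega]^2 \to \ell$ (for some $\ell$ depending on the instance) such that $d$ is computable from $\seq{c_k : k \geq 1}$ and such that every infinite $d$-homogeneous set $H$ computes an infinite set $Y$ which is almost homogeneous for \emph{every} $c_k$ simultaneously. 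In particular, for each $k$ individually, such an $H$ must compute some almost homogeneous set for $c_k$.

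Now I would apply Corollary \ref{cor:generalcoh} to pick a specific family $\seq{c_k : k \geq 1}$ of colorings $c_k : \omega \to k+1$ with the property that for every stable coloring $d : [\omega]^2 \to \ell$ (whether or not computable from the family), there is an infinite homogeneous set $H$ for $d$ and some $k \geq 1$ such that $H$ computes no almost homogeneous set for $c_k$. Feeding this family into the reduction from the previous paragraph produces a stable coloring $d$ computable from the family with the ``every solution computes an almost homogeneous set for each $c_k$'' property; but the conclusion of Corollary \ref{cor:generalcoh} furnishes a homogeneous set $H$ for this particular $d$ that computes no almost homogeneous set for some $c_k$. This is a direct contradiction, finishing the proof.

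The only thing to be careful about is the quantifier structure: Corollary \ref{cor:generalcoh} gives, for each stable $d$, a single $H$ and a single $k$ that witnesses failure, which is exactly what is needed to contradict the single reduction promised by $\COH \scred \SRT^2_{<\infty}$. I do not anticipate a real obstacle here — the substantive work was done in proving Theorem \ref{thm:kl} and deducing Corollary \ref{cor:generalcoh} from it; Corollary \ref{cor:maincoh} is essentially a one-line consequence, and the proof I would write consists of little more than ``apply Corollary \ref{cor:generalcoh} to the family supplied by the supposed reduction, and invoke Lemma \ref{lem:cohequiv}.''
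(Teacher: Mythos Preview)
Your proposal is correct and matches the paper's approach: the paper's proof is the single sentence ``This is immediate by Lemma \ref{lem:cohequiv},'' which (together with the immediately preceding Corollary \ref{cor:generalcoh}) is exactly the argument you spell out. You have simply written out the unpacking of the reducibilities in more detail than the paper does.
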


\begin{proof}
	This is immediate by Lemma \ref{lem:cohequiv}.
\end{proof}

We do not know how effective we can choose the instance of $\COH$ witnessing Corollary \ref{cor:maincoh} to be, and in particular, whether or not we can find a computable such instance. In the terminology introduced by Jockusch and Stephan \cite[Section 1]{JS-1993}, the latter is equivalent to whether there is a computable stable coloring of pairs $d$, every homogeneous set for which has $p$-cohesive degree, which is in turn equivalent to the aforementioned open question of whether $\COH \cred \SRT^2_2$. More generally, we do not know if there is any set $X$, and any stable coloring of pairs $d$ computable from $X$, such that for every infinite homogeneous set $H$ for $d$, $X \oplus H$ has $p$-cohesive degree relative to $X$. Corollary \ref{cor:maincoh} shows that the answer is no if we ask for $H$ itself, rather than $X \oplus H$, to have $p$-cohesive degree relative to $X$.

By contrast, the instance of $\COH$ witnessing Corollary \ref{cor:generalcoh} cannot be chosen to be computable, nor even $\Delta^1_1$. Indeed, for every $\Delta^1_1$ set $X$, it is easy to define a stable coloring $d : [\omega]^2 \to 2$ (not necessarily computable from $X$) such that the principal function of any infinite homogeneous set $H$ for $d$ computes $X$. (By results of Solovay \cite{Solovay-1978} and Groszek and Slaman \cite{GS-TA}, a set $X$ is $\Delta^1_1$ if and only if it has a modulus of computation, meaning a function $f : \omega \to \omega$ such that $X$ is computable from any function $g : \omega \to \omega$ that dominates $f$. Given a modulus $f$ for $X$, we set $d(x,y) = 0$ if $y-x \leq \max \{ f(z) : z \leq x+1\}$, and set $d(x,y) = 1$ otherwise. Then $\lim_y d(x,y) = 1$ for all $x$, so every infinite homogeneous set $H$ for $d$ has color $1$. But for any such $H$, $H(x+1) > f(x+1)$ for all $x$.) In particular, for every $\Delta^1_1$ set $X$, there is a stable coloring of pairs $d$, every infinite homogeneous set for which can compute $X'$, and hence can compute a solution to any $X$-computable instance of $\COH$. Hence, for every $\Delta^1_1$ set $X$ there is a stable coloring of pairs $d$, every infinite homogeneous set for which has $p$-cohesive degree relative to $X$.

The outline of the rest of the paper is as follows. In Section \ref{sec:setup}, we introduce the forcing notions we need for the proof of Theorem \ref{thm:kl}, and we prove the theorem modulo a key diagonalization step, Lemma \ref{lem:kl}. This lemma relies on a significant simplification and extension of the \emph{tree labeling method} of constructing homogeneous sets, introduced in~\cite[Section 5]{Dzhafarov-TA} to prove that $\COH \nscred \SRT^2_2$. We review the tree labeling method in Section \ref{sec:proof}, and then conclude by proving the lemma. Our notation in the sequel is standard, with the following exception. For a Turing functional $\Delta$, we write $\Delta^X(x) \simeq y$ to denote that either $\Delta^X(x)$ diverges or $\Delta^X(x) \downarrow = y$. For a finite set $F$, we follow the convention that if $\Delta^F(x) \downarrow$ then the computation halts with use bounded by $\max F$.

\section{Forcing notions and outline of proof}\label{sec:setup}

In this section, we prove Theorem \ref{thm:kl} modulo a key combinatorial lemma, Lemma \ref{lem:kl}, which we delay until the next section. Given $k > \ell$, we need to build an instance $c : \omega \to k$ of $\RT^1_k$, and for every stable coloring $d : [\omega]^2 \to \ell$, an infinite homogeneous set $H$ such that $H$ computes no infinite homogeneous set for $c$. We obtain each of $c$ and $H$ as a generic for a suitable notion of forcing: Cohen forcing in the case of $c$, and Mathias forcing in the case of $H$. We begin by defining the relevant forcing notions.

Throughout, let $M$ be a fixed countable transitive model of $\ZFC$. Let $\mathbb{C}_k$ be Cohen forcing with strings $\sigma \in k^{<\omega}$, so that a generic is a coloring $\omega \to k$. We let the desired instance $c$ of $\RT^1_k$ be a generic for $\mathbb{C}_k$ over $M$.

To define the homogeneous set $H$ for the given stable coloring $d : [\omega]^2 \to \ell$, we first recall the definition of Mathias forcing, which is frequently employed in the study of Ramsey's theorem. (See \cite{CDHS-2014,CDS-TA} for a general discussion of Mathias forcing in computability theory.) Here, conditions are pairs $(E,I)$ such that $E$ is a finite set, $I$ is an infinite set called a \emph{reservoir}, and $E < I$. A condition $(E',I')$ \emph{extends} $(E,I)$, denoted $(E',I') \leq (E,I)$, if $E \subseteq E' \subseteq E \cup I$ and $I' \subseteq I$. Frequently, the reservoirs $I$ in a particular Mathias forcing construction are restricted to a certain family of sets, for example the (infinite) computable sets, as in Cholak, Jockusch, and Slaman~\cite[Section 4]{CJS-2001}. In our case, we let $\mathcal{I}$ be the set of all infinite subsets of $\omega$ in the model $M$, and restrict to working with conditions $(E,I)$ with $I \in \mathcal{I}$.

For every~$i < k$ and~$j < \ell$, let~$\mathcal{I}_{i,j}$ be the collection of sets~$I \in \mathcal{I}$
such that for all increasing map $f \in M$ with $\ran(f) \in \mathcal{I}$, if $\ran(f) \subseteq I$, 
then there is some~$w \in \dom(f)$ such that~$c(w) = i$ and~$\lim_y d(f(w), y) = j$.
Note that~$\mathcal{I}_{i,j}$ is upward-closed in $(\mathcal{I}, \supseteq)$.

\begin{lemma}\label{lem:avoid-color}
For every~$i < k$, the set~$\mathcal{I}_i = \bigcup_{j < \ell} \mathcal{I}_{i,j}$ is dense in $(\mathcal{I}, \supseteq)$.
\end{lemma}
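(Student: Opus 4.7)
The plan is to prove the density contrapositively: I assume that $I_0 \in \mathcal{I}$ has no subset in $\mathcal{I}$ belonging to any $\mathcal{I}_{i,j}$, and I extract a single element at which all $\ell$ possible limit colors are simultaneously forbidden, contradicting the stability of $d$. Unpacking the definition, the contrary assumption supplies, for each $I \in \mathcal{I}$ with $I \subseteq I_0$ and each $j < \ell$, an increasing $f_{I,j} \in M$ with $\ran(f_{I,j}) \subseteq I$ such that whenever $c(w) = i$ one has $\lim_y d(f_{I,j}(w), y) \neq j$.

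First I would build a descending chain $I_0 \supseteq I_1 \supseteq \cdots \supseteq I_\ell$ in $\mathcal{I}$ together with functions $g_0, \ldots, g_{\ell-1} \in M$ by recursion: take $g_s$ to be the witness for $I_s \notin \mathcal{I}_{i,s}$ and set $I_{s+1} = \ran(g_s)$. Since $\ran(g_s) \subseteq \ran(g_{s-1})$, I can factor $g_s = g_{s-1} \circ h_s$ for a (necessarily increasing) $h_s \in M$, and inductively $g_s = g_0 \circ h_1 \circ \cdots \circ h_s$. The property retained from the definition is that for each $s < \ell$, every $w$ with $c(w) = i$ satisfies $\lim_y d(g_s(w), y) \neq s$.

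Next I would invoke Cohen genericity of $c$ over $M$. Given the tuple $(h_1, \ldots, h_{\ell-1}) \in M$, consider, for each $w$, the $\ell$ nested positions $u_{\ell-1}(w) = w$ and $u_s(w) = h_{s+1}(u_{s+1}(w))$ for $s < \ell - 1$. Any Cohen condition can be extended to a condition that assigns color $i$ to each $u_s(w)$ for some sufficiently large $w$ (these are finitely many positions lying beyond the condition), so the set of conditions forcing the existence of such a $w$ is dense. Genericity then produces an actual $w \in \omega$ with $c(u_s(w)) = i$ for every $s < \ell$.

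The contradiction now follows from the identity $g_s(u_s(w)) = g_{\ell-1}(w)$ for every $s$, visible from the decomposition $g_{\ell-1} = g_s \circ h_{s+1} \circ \cdots \circ h_{\ell-1}$. Applying the property of $g_s$ at the point $u_s(w) \in c^{-1}(i)$ gives $\lim_y d(g_{\ell-1}(w), y) \neq s$ for every $s < \ell$, which is absurd since $\lim_y d(g_{\ell-1}(w), y) \in \{0, \ldots, \ell-1\}$. I expect the main obstacle to be organizing the nested factorizations so that a single generic $w$ simultaneously produces $\ell$ incompatible exclusions at the common target $g_{\ell-1}(w)$; Cohen genericity is precisely what supplies the required tuple of preimages in one stroke.
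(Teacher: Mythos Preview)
Your proof is correct and follows essentially the same approach as the paper: build a descending chain $I_0 \supseteq \cdots \supseteq I_\ell$ by successively taking ranges of the failure witnesses $g_s$ for $I_s \notin \mathcal{I}_{i,s}$, then use Cohen genericity of $c$ over $M$ to color a whole finite fibre with $i$ and derive a contradiction against stability of $d$. The only cosmetic difference is that the paper parametrizes by the common target $x \in I_\ell$ and writes the fibre directly as $\{g_s^{-1}(x) : s < \ell\}$, whereas you parametrize by $w \in \dom(g_{\ell-1})$ and reach the same fibre through the factorizations $h_s$; your identity $g_s(u_s(w)) = g_{\ell-1}(w)$ is exactly $u_s(w) = g_s^{-1}(x)$ with $x = g_{\ell-1}(w)$, so the extra factoring machinery is not needed but does no harm.
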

\begin{proof}
Suppose that $\mathcal{I}_i$ is not dense and let~$I \in \mathcal{I}$ have no extension in~$\mathcal{I}_i$.
Define a finite sequence of increasing maps $f_0, \dots, f_{\ell-1} \in M$
and a finite sequence of sets $I = I_0 \supseteq \dots \supseteq I_{\ell} \in \mathcal{I}$
such that for each~$j < k$, $\ran(f_j) = I_{j+1}$
and $f_j$ witness that~$I_j$ has no extension in~$\mathcal{I}_{i,j}$,
that is, for every~$w \in \dom(f)$, if~$c(w) = i$ then~$\lim_y d(f(w), y) \neq j$.
For each~$x \in I_\ell$, let~$F_x = \{ f^{-1}_j(x) : j < \ell \}$.
The set~$P = \{ F_x : x \in I_{\ell}$ belongs to $M$,
so by $M$-genericity of~$c$, $c(F_x) = \{i\}$ for some~$x \in I_{\ell}$.
By choice of~$F_x$, $\lim_y d(x, y) \not \in \ell$. Contradiction.
\end{proof}

Let~$\LimSet \in \bigcap_{i < k} \mathcal{I}_i$ and let~$j_i < \ell$ be such that~$I_d \in \mathcal{I}_{i,j}$ for each~$i < k$.
Since~$k > \ell$, there are some~$i_0 < i_1 < k$ such that $j_{i_0} = j_{i_1}$. Let~$j = j_{i_0} = j_{i_1}$.
From now on, $\LimSet, i_0, i_1$ and~$j$ are fixed. The following lemma is a useful consequence
of the choices of~$\LimSet$ and~$j$.

\begin{lemma}\label{lem:infcolors}
For all $I \in \mathcal{I}$, if $I \subseteq \LimSet$ then $\lim_y d(x,y) = j$ for infinitely many $x \in I$.
\end{lemma}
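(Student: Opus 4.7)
The plan is to prove the contrapositive by directly invoking the fact that $\LimSet \in \mathcal{I}_{i_0, j}$ (using, say, $i_0$; the choice between $i_0$ and $i_1$ is immaterial since both yield the same index $j$). So suppose, for contradiction, that there is some $I \in \mathcal{I}$ with $I \subseteq \LimSet$ but only finitely many $x \in I$ satisfying $\lim_y d(x,y) = j$.

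Next I would trim $I$ down to a set $I_0$ to which the defining property of $\mathcal{I}_{i_0,j}$ can be applied. Let $F = \{x \in I : \lim_y d(x,y) = j\}$, which by assumption is finite, and set $I_0 = I \setminus F$. Since $I \in \mathcal{I}$ and every finite subset of $\omega$ belongs to the transitive model $M$, we have $I_0 \in M$; moreover $I_0$ is infinite and contained in $\LimSet$, so $I_0 \in \mathcal{I}$. Let $f \in M$ be the increasing enumeration of $I_0$, so $\ran(f) = I_0 \subseteq \LimSet$.

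Finally, apply the fact that $\LimSet \in \mathcal{I}_{i_0,j}$: there must be some $w \in \dom(f)$ with $c(w) = i_0$ and $\lim_y d(f(w), y) = j$. In particular $f(w) \in I_0$ is an element of $I$ with $\lim_y d(f(w),y) = j$, contradicting the construction of $I_0$ as containing no such element. This gives the desired contradiction and completes the proof.

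The argument is essentially just unpacking the definition of $\mathcal{I}_{i_0,j}$, so there is no real obstacle; the only point requiring any care is verifying that the trimmed set $I_0$ still belongs to $M$ (and hence to $\mathcal{I}$), which follows immediately from the finiteness of $F$ and transitivity of $M$. Note that stability of $d$ ensures $\lim_y d(x,y)$ is well-defined in $\{0,\ldots,\ell-1\}$ for every $x$, so the set $F$ is genuinely well-defined.
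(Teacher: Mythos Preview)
Your proof is correct and follows essentially the same approach as the paper: both arguments trim $I$ to a subset in $\mathcal{I}$ and apply the defining property of $\LimSet \in \mathcal{I}_{i_0,j}$ to its increasing enumeration. The only cosmetic difference is that the paper proceeds directly (for each $n$, it trims to $I \setminus [0,n]$ and finds an $x > n$ with $\lim_y d(x,y) = j$), whereas you argue by contradiction and trim by removing the finite set $F$; your observation that any finite subset of $\omega$ lies in the transitive model $M$ is exactly what makes this variant work.
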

\begin{proof}
Fix some~$I \in \mathcal{I}$ such that $I \subseteq \LimSet$
and fix some~$n \in \omega$. Define~$f : \omega \to I \setminus [0,n]$
be the increasing map which to $x$ associates the $x$th element of $I \setminus [0,n]$.
Since $\LimSet \in \mathcal{I}_{i_0, j}$ and~$f \in M$, there is some~$w \in \dom(f) = \omega$
such that~$c(w) = i_0$ and~$\lim_y d(f(w), y) = j$. In particular~$f(w) > n$,
so there are an unbounded number of~$x \in I$ such that~$\lim_y d(x, y) = j$.
\end{proof}

We are now ready to define our notion of forcing.

\begin{definition}
We define $\mathbb{M}_{d,\LimSet,j}$ to be the following notion of forcing. A \emph{condition} is a Mathias condition $(E, I)$ such that $I \in \mathcal{I}$ with $I \subseteq \LimSet$, and $E$ is homogeneous for $d$ with color $j$, and $d(x,y) = j$ for all $x \in E$ and all $y \in I$. A condition $(E', I')$ \emph{extends} $(E, I)$, denoted $(E', I') \leq (E, I)$, if $(E',I') \leq (E,I)$ as a Mathias condition.
\end{definition}

In particular, if $(E, I)$ is an $\mathbb{M}_{d,\LimSet,j}$ condition then $E$ is limit homogeneous for $d$ with color $j$, and $E \cup \{y\}$ is homogeneous with color $j$ for each $y \in I$. A generic filter for this forcing thus yields a subset~$H$ of $\LimSet$
 which is homogeneous for $d$ with color $j$.

\begin{lemma}\label{lem:Hjinf}
If $H$ is generic for $\mathbb{M}_{d,\LimSet,j}$, then $H$ is infinite.
\end{lemma}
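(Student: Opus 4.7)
The plan is to show, by a standard density argument, that for every $n \in \omega$ the set of $\mathbb{M}_{d,\LimSet,j}$-conditions $(E,I)$ with $|E| \geq n$ is dense. Granted this, a sufficiently generic $H$ must meet each such dense set, hence be infinite.

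To prove density, fix a condition $(E,I)$; I want to extend it by adding a single element to the finite part. By Lemma \ref{lem:infcolors}, the set of $x \in I$ with $\lim_y d(x,y) = j$ is infinite, so I can pick such an $x$ strictly greater than $\max E$. Using stability of $d$, choose $s > x$ large enough that $d(x,y) = j$ for every $y \geq s$, and set $I' = \{y \in I : y > s\}$.

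It then remains to check that $(E \cup \{x\}, I')$ is a valid extension. The key point that makes the construction legitimate is that $I'$ lies in $\mathcal{I}$: since $I \in M$ and $s \in \omega \subseteq M$, truncating $I$ past $s$ is a definable operation in $M$, so $I' \in M$, and $I'$ is infinite because only finitely many elements of $I$ were discarded. Also $I' \subseteq I \subseteq \LimSet$. The homogeneity conditions are immediate: $E \cup \{x\}$ is homogeneous for $d$ with color $j$ because $E$ is, and for every $y \in E$ we have $d(y,x) = j$ since $x \in I$; and for every $y \in E \cup \{x\}$ and $z \in I'$ we have $d(y,z) = j$, using the old condition for $y \in E$ and the choice of $s$ for $y = x$.

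I do not expect any real obstacle here: the only subtlety is ensuring the new reservoir stays in $\mathcal{I}$, which is handled by observing that removing a finite initial segment of a set in $M$ yields a set in $M$. All other verifications are routine extensions of the Mathias-style bookkeeping already built into the definition of $\mathbb{M}_{d,\LimSet,j}$.
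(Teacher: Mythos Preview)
Your proof is correct and follows essentially the same route as the paper: use Lemma~\ref{lem:infcolors} to find an $x \in I$ with $\lim_y d(x,y) = j$, truncate the reservoir past the point where $d(x,\cdot)$ has stabilized, and observe that the resulting pair is a valid extension with a strictly larger finite part. Your write-up is in fact a bit more careful than the paper's in spelling out why $I' \in \mathcal{I}$ and why the homogeneity clauses of the condition are preserved, but there is no substantive difference in the argument.
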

\begin{proof}
	Fix any condition $(E, I)$. By Lemma~\ref{lem:infcolors}, there is some~$x \in I$ such that $\lim_y d(x,y) = j$.
Let $m > x$ be such that $d(x,y) = j$ for all $y \geq m$. Let $E' = E \cup \{x\}$, and let $I' = \{x \in I: x > m\}\}$, which is a co-initial segment of $I$ and hence belongs to $\mathcal{I}$. Then $(E', I') \leq (E, I)$ and $|E'| = |E| + 1$. Thus, it is dense to add an element to $H$, so by genericity, $H$ is infinite.
\end{proof}

We fix a countable transitive model $M'$ of $\ZFC$ with $M \cup \{c, d\} \subseteq M'$, and choose the set $H$ to be generic for $\mathbb{M}_{d,\LimSet,j}$ over $M'$. 
The following lemma, whose proof we give in the next section, will allow us to complete the proof of Theorem \ref{thm:kl}.

\begin{lemma}\label{lem:kl}
	Let $\Delta$ be a Turing functionals Then for each $i \in \{i_0, i_1\}$, one of the following holds:
	\begin{enumerate}
		\item $\Delta^H$ is not (the characteristic function of) an infinite set;
		\item there is a $w \in \omega$ such that $\Delta^H(w) \downarrow = 1$ and $c(w) = i$.
	\end{enumerate}
\end{lemma}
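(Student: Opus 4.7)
The plan is to prove Lemma \ref{lem:kl} by a density argument for the forcing $\mathbb{M}_{d,\LimSet,j}$. Fix a Turing functional $\Delta$ and $i \in \{i_0, i_1\}$; it suffices to show that the set of conditions forcing ``(1) or (2) for $\Delta, i$'' is dense below every condition, since the lemma then follows from the genericity of $H$ over $M'$.

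First I would handle the easy case. Given a condition $(E, I)$, if there exist a finite $F \subseteq I$, a reservoir $I'' \in \mathcal{I}$ with $(E \cup F, I'') \leq (E, I)$, and a $w \in \omega$ such that $\Delta^{E \cup F}(w) \downarrow = 1$ and $c(w) = i$, then this extension forces $\Delta^H(w) \downarrow = 1$ with $c(w) = i$, i.e., forces (2). So I may assume no such data exist. In that case $(E, I)$ already forces $\Delta^H \cap c^{-1}(i) = \emptyset$, since any convergence $\Delta^H(w) \downarrow = 1$ with $c(w) = i$ would be witnessed by a finite initial segment of $H$ which, together with an appropriate reservoir guaranteed by genericity, would supply such forbidden data.

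The harder task is then to refine $(E, I)$ to force (1), namely that $\Delta^H$ is not the characteristic function of an infinite set, because the disjointness $\Delta^H \cap c^{-1}(i) = \emptyset$ still allows $\Delta^H$ to be an infinite subset of $c^{-1}(\omega \setminus \{i\})$. To rule this out I would invoke the tree labeling method developed in the next section. The rough strategy is to organize the $j$-homogeneous finite extensions of $E$ inside $I$ into a tree $T$, label each node by a finite approximation to the $\Delta$-behavior it forces, and then use the failure of the easy case together with the property $\LimSet \in \mathcal{I}_{i_0, j} \cap \mathcal{I}_{i_1, j}$---which repackages Cohen-genericity of $c$ over $M$ as a combinatorial statement about $M$-indexed enumerations of subsets of $\LimSet$---to prune $T$ to an infinite branch along which the labels stabilize. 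The union of this branch then yields a reservoir $I^* \in \mathcal{I}$ such that $(E, I^*) \leq (E, I)$ forces $\Delta^H$ to be finite.

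The main obstacle I anticipate lies in this pruning step, which must reconcile three tensions: each node-extension is constrained by $j$-homogeneity for $d$, a $d$-dependent condition with $d \notin M$; the resulting reservoir $I^*$ must live in $\mathcal{I}$, hence in $M$; and the labels must be fine enough to witness potential growth of $\Delta^H$ yet coarse enough to admit a convergent pruning. The crucial ingredient, which I expect the tree labeling machinery to supply, is an $M$-definable repackaging of the $d$-dependent ``reachability of a new $\Delta$-value at some fresh $w$'' in a form on which the $\mathcal{I}_{i,j}$-property of $\LimSet$ can be directly applied.
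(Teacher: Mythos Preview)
Your dichotomy---first try to force (2), and if that fails, force (1)---is logically sound, but you have inverted the roles of the two key tools. In the paper's argument, the tree $T_n = T(n,\Delta,E,I)$ is built from $E$, $I$, $\Delta$, and $n$ alone, with \emph{no} reference to $d$ or $j$-homogeneity; this is precisely what places $T_n$ in $M$. The case split is then on whether some $T_n$ is ill-founded. If so, an infinite path through $T_n$ (which exists in $M$ by absoluteness) directly gives a reservoir $I' \in \mathcal{I}$ forcing $\Delta^{E \cup F}(w) \simeq 0$ for all $w \geq n$ and all $F \subseteq I'$---this is how (1) is forced, and no labeling is involved. Tree \emph{labeling} is invoked only when every $T_n$ is well-founded, and its purpose is to force (2): the labels organize the witnesses $w$ to $\Delta^{E \cup F}(w)\downarrow = 1$ at terminal nodes, and one then navigates down $T^L_n$ (using Lemma~\ref{lem:row} to stay $j$-homogeneous for $d$) to reach a terminal node whose label $w$ satisfies $c(w) = i$. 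The property $\LimSet \in \mathcal{I}_{i,j}$ is used in this navigation (Case~3) to choose, among the successors of an $\infty$-labeled node, one whose label $w$ has $c(w) = i$ \emph{and} whose entry $x$ has $\lim_y d(x,y) = j$; it is not used to produce an infinite branch.

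Your plan, by contrast, builds a $d$-dependent tree of $j$-homogeneous extensions and hopes to prune it to an infinite branch yielding $I^* \in \mathcal{I}$. You correctly flag that such a tree need not lie in $M$, but the resolution you anticipate---an $M$-definable repackaging via $\mathcal{I}_{i,j}$---does not materialize in that form. The actual fix is to keep $d$ out of the tree entirely and bring it in only when walking down the labeled tree. Moreover, even granting your tree, you give no argument that it \emph{has} an infinite branch under your hypothesis (B); in fact, the contrapositive of the paper's Cases~2 and~3 shows that if (2) cannot be forced below $(E,I)$, then some (paper-style) $T_n$ is ill-founded, and that---not pruning a labeled tree---is where the reservoir comes from. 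You are also missing the separate use of Cohen genericity of $c$ over $M$ in Case~2, where the set $\{w_n : n \in \omega\}$ of root labels lies in $M$ and hence must meet $c^{-1}(i)$.
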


\noindent The theorem is now an immediate consequence.

\begin{proof}[Proof of Theorem \ref{thm:kl}]
	We claim that $H$ computes no infinite homogeneous set for $c$. By Lemma \ref{lem:Hjinf}, $H$ is an infinite homogeneous set for $d$, so this suffices. Seeking a contradiction, suppose not. Let $\Delta$ be a Turing functional such that $\Delta^H$ is an infinite homogeneous set for $c$. By Lemma \ref{lem:kl}, for each $i \in \{i_0, i_1\}$ we can find a number $w$ in the set $\Delta^H$ such that $c(w) = i$,
so $\Delta^H$ is not homogeneous for $c$ after all.
\end{proof}

\section{Proof of Lemma \ref*{lem:kl}}\label{sec:proof}

As mentioned in the introduction, the proof of Lemma \ref{lem:kl} employs the so-called tree labeling method, introduced in \cite{Dzhafarov-TA}. As this method is new, we begin this section with a careful presentation of this method.

\subsection{Tree labeling}

Let $\lambda$ denote the empty string. For a non-empty string $\alpha$, we let $\trunc{\alpha} = \alpha \upharpoonright |\alpha|-1$. That is, $\trunc{\alpha}$ is the string formed by removing the last element of $\alpha$. Given a string $\beta$, we write $\alpha * \beta$ for the concatenation of $\alpha$ by $\beta$, and given $x \in \omega$, we write $\alpha*x$ for the concatenation of $\alpha$ by the singleton sequence $\seq{x}$. We call $\alpha*x$ a \emph{successor} of $\alpha$. Note that $\trunc{(\alpha*x)} = \alpha$.

\begin{definition}
	Fix $n \in \omega$, and let $\Delta$ be a Turing functional and $(E,I)$ a Mathias condition. We define $T(n,\Delta,E,I) \subseteq I^{< \omega}$ by $\lambda \in T(n,\Delta,E,I)$ and for a non-empty string $\alpha$, $\alpha \in T(n,\Delta,E,I)$ if $\alpha \in I^{< \omega}$ is increasing and 
	\[
		\forall F \subseteq \ran(\trunc{\alpha}) \, \forall w \geq n \, ( \Delta^{E \cup F}(w) \simeq 0).
	\] 
\end{definition}

It is clear from the definition that $T(n,\Delta,E,I)$ is closed under initial segments, and so is a tree. In what follows, we will always interpret the functional $\Delta$ in these trees to have output values restricted to $\{ 0,1 \}$.

\begin{lemma}\label{lem:Tprops} 
$T = T(n,\Delta,E,I)$ has the following properties. 
\begin{enumerate}
\item If $T$ is not well-founded and $P$ is any infinite path through $T$, then $\ran(P)$ is infinite and $\Delta^{E \cup F}(w) \simeq 0$ for all $w \geq n$ and all $F \subseteq \ran(P)$.
\item If $\alpha \in T$ is not terminal, then $\forall x \in I \, (x > \ran(\alpha) \rightarrow \alpha*x \in T)$.
\item If $\alpha \in T$ is terminal, then there is an $F \subseteq \ran(\alpha)$ and a $w \geq n$ such that $\Delta^{E \cup F}(w) \downarrow = 1$. In particular, if $T$ consists of just the root node then $\Delta^E(w) \downarrow = 1$ for some $w \geq n$.
\end{enumerate}
\end{lemma}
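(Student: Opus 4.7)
The plan is to observe that all three properties follow essentially from unwinding the definition of $T$, together with the key structural remark that the condition for membership $\alpha \in T$ depends only on $\trunc{\alpha}$ (for non-empty $\alpha$) and not on the last entry of $\alpha$. Consequently the tree is \emph{bushy}: whether $\alpha$ has any successor in $T$ is determined entirely by $\ran(\alpha)$, so either every $\alpha * x$ with $x \in I$ and $x > \ran(\alpha)$ lies in $T$, or none does.

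I would start with (2), since it drives (3). Suppose $\alpha \in T$ is not terminal, so that some $\alpha * y \in T$. Its defining condition reads $\forall F \subseteq \ran(\trunc{(\alpha * y)}) \, \forall w \geq n \, (\Delta^{E \cup F}(w) \simeq 0)$, and $\trunc{(\alpha * y)} = \alpha$, so the condition is really about $\ran(\alpha)$ and makes no mention of $y$. Hence for any $x \in I$ with $x > \ran(\alpha)$ the string $\alpha * x$ is increasing, lies in $I^{<\omega}$, and satisfies the same condition, so $\alpha * x \in T$.

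For (3), I would argue the contrapositive. Suppose $\forall F \subseteq \ran(\alpha) \, \forall w \geq n \, (\Delta^{E \cup F}(w) \simeq 0)$; because the functional's outputs are restricted to $\{0,1\}$, this is exactly the negation of the conclusion. Since $I$ is infinite, pick any $x \in I$ with $x > \ran(\alpha)$; then $\alpha * x \in T$, so $\alpha$ is not terminal. The special case $T = \{\lambda\}$ falls out by taking $\alpha = \lambda$: then $\ran(\alpha) = \emptyset$, the only choice of $F$ is $\emptyset$, and the conclusion collapses to $\Delta^E(w) \downarrow = 1$ for some $w \geq n$.

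Finally, for (1), $\ran(P)$ is infinite simply because every string in $T$ is increasing. For the computational clause, I would handle possibly infinite $F \subseteq \ran(P)$ by the usual use principle: if $\Delta^{E \cup F}(w) \downarrow$, only finitely many oracle bits are queried, so there is a finite $F_0 \subseteq F$ with $\Delta^{E \cup F_0}(w)$ giving the same value. Choosing $m$ large enough that $F_0 \subseteq \ran(P \upharpoonright m)$ and setting $\alpha = P \upharpoonright (m+1) \in T$, the defining condition applied to $\alpha$ (whose $\trunc{\alpha}$ has range containing $F_0$) yields $\Delta^{E \cup F_0}(w) \simeq 0$, hence $= 0$, and therefore $\Delta^{E \cup F}(w) = 0$. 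I do not anticipate a real obstacle here; the lemma is essentially bookkeeping around the definition, and the only mild subtlety is the handling of infinite $F$ in (1), which is absorbed by the routine use argument above.
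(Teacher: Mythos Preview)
Your proposal is correct and follows essentially the same approach as the paper: the key observation that membership of $\alpha*x$ in $T$ depends only on $\ran(\alpha)$ drives (2) and (3) exactly as in the paper, and your contrapositive phrasing of (3) is just a rewording of the paper's direct argument. For (1) the paper simply says it is immediate from the definition, whereas you spell out the use-principle reduction to a finite $F_0$; this extra care is appropriate (and harmless), since the defining clause of $T$ only directly speaks about finite $F$.
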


\begin{proof}
Property (1) follows immediately from the definition of $T(n,\Delta,E,I)$. For property (2), let $\alpha \in T$ be a non-terminal node and let $\alpha*x$ be a successor of $\alpha$ in $T$. By definition, for every $F \subseteq \ran(\trunc{(\alpha*x)}) = \ran(\alpha)$ and every $w \geq n$, we have $\Delta^{E \cup F}(w) \simeq 0$. But this fact is independent of $x$. Hence, so long as $x \in I$ and $x > \ran(\alpha)$, so that $\alpha*x$ is an increasing sequence from $I$, we have $\alpha*x \in T$. For property (3), let $\alpha$ be a terminal node. Because $I$ is infinite, there is an $x \in I$ with $x > \ran(\alpha)$. And since $\alpha*x \not \in T$, there is $F \subseteq \ran(\trunc{(\alpha*x)}) = \ran(\alpha)$ and a $w \geq n$ such that $\Delta^{E \cup F}(w) \downarrow = 1$.
\end{proof}

Our main concern in the proof of Lemma \ref{lem:kl} will be when $T$ is well-founded. If this is the case, we label the nodes of $T(n,\Delta,E,I)$ and prune to a more uniformly labeled subtree.

\begin{definition}
	Let $T = T(n,\Delta,E,I)$ be well-founded. We label the nodes of $T$ by recursion starting at the terminal nodes. The label of each node will be either a number $w$ or the symbol $\infty$. If $\alpha \in T$ is terminal, then for some $w \geq n$ there is an $F \subseteq \ran(\alpha)$ such that $\Delta^{E \cup F}(w) \downarrow = 1$, and we label $\alpha$ by the least such $w$. Now suppose $\alpha \in T$ is not terminal, and assume by recursion that every successor of $\alpha$ in $T$ has been labeled. If there is a number $w$ such that infinitely many of the successors of $\alpha$ are labeled $w$, we choose the least such $w$ and label $\alpha$ with $w$. Otherwise, we label $\alpha$ with $\infty$.
\end{definition}

Thus, each non-terminal $\alpha \in T$ either has a numerical label $w$, in which case so do infinitely many of its successors in $T$, or $\alpha$ has label $\infty$. In the latter case, each number $w$ that appears as the label of any successor of $\alpha$ can label at most finitely many other successors of $\alpha$.

\begin{definition}
	Let $T = T(n,\Delta,E,I)$ be well-founded. We define the \emph{labeled subtree} $T^L = T^L(n,\Delta,E,I)$ of $T$ as follows, starting at the root of $T$ and working down by levels. In all cases, if a node of $T$ is placed in $T^L$ then it retains its label from $T$. To begin, the root of $T$ is placed in $T^L$. Now suppose $\alpha \in T$ has been placed in $T^L$. If $\alpha$ has a numerical label $w$, then each successor of $\alpha$ in $T$ with label $w$ is placed in $T^L$. If $\alpha$ has label $\infty$ and infinitely many of its successors have label $\infty$, then each successor of $\alpha$ in $T$ with label $\infty$ is placed in $T^L$. If $\alpha$ has label $\infty$ and only finitely many of its successors have label $\infty$, then $\alpha$ must have successors labeled with infinitely many different numerical values. For each numerical value $w$ which labels some successor of $\alpha$, we pick the least $x$ such that $\alpha*x$ has label $w$ and place $\alpha*x$ in $T^L$.
\end{definition}

The next lemma lists properties of $T^L$ which are straightforward to verify.

\begin{lemma}
\label{lem:pruned}
Assume $T = T(n,\Delta,E,I)$ is well-founded. A node $\alpha \in T^L$ is terminal in $T^L$ if and only if it is terminal in $T$. 
Each non-terminal node $\alpha \in T^L$ has infinitely many successors in $T^L$, and these successors are either all labeled with the same numerical value $w$ (if $\alpha$ has label $w$), are all labeled $\infty$ (and $\alpha$ is labeled $\infty$), or each successor has a distinct numerical label (and $\alpha$ is labeled $\infty$). 
\end{lemma}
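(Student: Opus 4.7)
The plan is to unpack both the labeling procedure on $T$ and the pruning rule that defines $T^L$, and then proceed by a short case analysis on the label of $\alpha$ in $T$. First I would dispose of the easy half of the terminal equivalence: if $\alpha \in T^L$ is terminal in $T$, it has no successors in $T$ and hence none in $T^L \subseteq T$, so it is terminal in $T^L$ as well. The converse direction will drop out of the structural analysis, so the bulk of the work lies there.

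For the structural claim, fix a node $\alpha \in T^L$ that is non-terminal in $T$. By Lemma~\ref{lem:Tprops}(2), $\alpha * x \in T$ for every $x \in I$ with $x > \ran(\alpha)$, so $\alpha$ has infinitely many successors in $T$. Now I would split on the label of $\alpha$. If $\alpha$ carries a numerical label $w$, the labeling rule at non-terminal nodes tells us that infinitely many successors of $\alpha$ in $T$ are already labeled $w$, and the $T^L$-construction places every such successor into $T^L$; this matches the first clause of the trichotomy. If instead $\alpha$ carries label $\infty$, then by definition no numerical value labels infinitely many of its successors in $T$. Here one further subdivision is needed: if infinitely many successors carry label $\infty$, $T^L$ inherits all of them, giving the second clause; otherwise only finitely many successors carry $\infty$, so infinitely many carry numerical labels, and since no single numerical value is used infinitely often, pigeonhole forces infinitely many distinct numerical labels to appear. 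The pruning rule then selects, for each such label $w$, the least $x$ with $\alpha * x$ labeled $w$, producing the third clause.

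Each branch of the case analysis produces infinitely many successors of $\alpha$ in $T^L$, so $\alpha$ is non-terminal in $T^L$. This supplies the missing direction of the terminal equivalence (non-terminal in $T$ implies non-terminal in $T^L$, for nodes of $T^L$) and simultaneously yields the trichotomy. The only step that is not a direct reading-off from the definitions is the pigeonhole observation in the third subcase, and even that is immediate from what it means for $\alpha$ to be labeled $\infty$ in the first place; so I do not expect a serious obstacle anywhere. The proof is essentially a bookkeeping exercise in the two nested definitions.
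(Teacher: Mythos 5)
Your proof is correct and is exactly the kind of direct unfolding of the two definitions that the paper has in mind when it calls the lemma ``straightforward to verify'' (no proof is actually printed). The case split on $\alpha$'s label, the appeal to Lemma~\ref{lem:Tprops}(2) to get infinitely many $T$-successors, and the pigeonhole step in the third subcase all match what is intended.
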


Suppose $T = T(n,\Delta,E,I)$ is well-founded. For each non-terminal $\alpha \in T^L$, we define the \emph{row below $\alpha$} to be the infinite set $\{x : \alpha * x \in T^L\}$. In the event that each $\alpha*x$ has a distinct numerical label, we also define the \emph{labeled row below $\alpha$} to be $\{ \seq{x,w} : \alpha*x \in T^L \wedge \alpha*x \text{ has label } w\}$. Thus, the labeled row below $\alpha$ is defined if and only if $\alpha$ has label $\infty$ but each of its successors has a numerical label. We call a set a \emph{row of $T^L$} or \emph{labeled row of $T^L$} if it is the row or labeled row below some non-terminal $\alpha \in T^L$. If $Y$ is a labeled row of $T^L$, then $\{x : \exists w~\seq{x,w} \in Y\}$ is a row of $T^L$, and we denote it by $\pi_1 Y$. The following observation will serve as a crucial connection with the forcing notions defined in the previous section.

\begin{observation}\label{obs:main}
	Recall the collection $\mathcal{I}$ of infinite subsets of $\omega$ in our fixed countable transitive model $M$ of $\ZFC$, as well as the model $M'$ extending $M$ and containing the generic $c$ and the given stable coloring $d : [\omega]^2 \to \ell$. If $I \in \mathcal{I}$ then $T = T(n,\Delta,E,I)$ belongs to $M$. If $T$ is not well-founded, it follows by $\Pi^1_1$ absoluteness that $T$ is not well-founded in $M$, and hence $M$ contains an infinite path through $T$. The range of this path thus belongs to $\mathcal{I}$. And if $T$ is well-founded, then every row and every labeled row of $T^L$ belongs to $M$, and in particular, every row of $T^L$ belongs to $\mathcal{I}$.
\end{observation}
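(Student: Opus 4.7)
The plan is to verify the three assertions of the observation in turn, each by a short absoluteness or definability argument. For the first, I would unpack the definition: a string $\alpha$ lies in $T = T(n,\Delta,E,I)$ iff $\alpha$ is an increasing element of $I^{<\omega}$ and $\forall F \subseteq \ran(\trunc{\alpha})\, \forall w \geq n\, (\Delta^{E \cup F}(w) \simeq 0)$. The quantifier $\forall F$ is bounded (since $\ran(\trunc{\alpha})$ is finite), and ``$\Delta^{E\cup F}(w) \simeq 0$'' is $\Pi^0_1$ in its parameters. Thus the entire predicate is arithmetical in $n,\Delta,E,I$, each of which lies in $M$ (recall $I \in \mathcal{I} \subseteq M$). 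Since $M \models \ZFC$ interprets arithmetical formulas correctly, $T \in M$.

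For the second assertion, I would invoke the absoluteness of well-foundedness (Mostowski / $\Pi^1_1$ absoluteness): ``$T$ is well-founded'' is $\Pi^1_1$ in $T$ and hence absolute between the transitive model $M$ and $V$. Contrapositively, if $T$ is not well-founded in $V$, it is not well-founded in $M$, so $M$ contains an infinite path $P$ through $T$. By the very definition of $T$, $P$ is a strictly increasing sequence with values in $I$, so $\ran(P)$ is an infinite subset of $\omega$ lying in $M$; that is, $\ran(P) \in \mathcal{I}$.

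For the third assertion, assume $T$ is well-founded (hence well-founded in $M$ by the same absoluteness). I would define the labeling $L : T \to \omega \cup \{\infty\}$ by recursion along the well-founded relation on $T$, carried out inside $M$. The terminal clause is an arithmetical search (least $w \geq n$ with some $F \subseteq \ran(\alpha)$ such that $\Delta^{E \cup F}(w) \downarrow = 1$), and the non-terminal clause asks whether some $w$ labels infinitely many successors of $\alpha$. This latter condition is at worst $\Sigma^0_2$ in the restriction of $L$ to the set of successors of $\alpha$ in $T$, which $M$ can extract from $\alpha, T, I$; it is therefore arithmetical in parameters from $M$ and absolute. Hence $L \in M$, and the parallel recursion defining $T^L$ from $T$ and $L$ keeps $T^L \in M$. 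Rows and labeled rows of $T^L$ are then arithmetically definable from $T^L$ and so belong to $M$; by Lemma \ref{lem:pruned} every row below a non-terminal node of $T^L$ is infinite, so every row lies in $\mathcal{I}$.

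The observation is really a catalog of absoluteness facts rather than a result with a deep obstacle, but the step that requires the most care is precisely this third one: one must check at each node that the inductive label-computation is faithfully carried out inside $M$. The key point is that the set of successors $\{x \in I : \alpha * x \in T\}$ is determined by $\alpha, T, I$, all of which are in $M$, so the ``infinitely many successors with label $w$'' predicate is $\Pi^0_2$ in parameters from $M$ at every stage of the recursion, and therefore absolute, which is what allows the whole construction to stay inside $M$.
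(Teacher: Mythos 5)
Your argument is correct and tracks the justification the paper gives inline in the statement of the observation (there is no separate proof in the paper): $T$ is arithmetically definable from parameters in $M$, ill-foundedness passes down to $M$ by $\Pi^1_1$ (Mostowski) absoluteness so that $M$ contains an infinite path whose range is then an infinite subset of $\omega$ in $M$, and in the well-founded case the labeling and pruning are absolute arithmetical recursions that $M$ can carry out, with Lemma~\ref{lem:pruned} supplying infiniteness of the rows. One minor slip: ``some $w$ labels infinitely many successors of $\alpha$'' is $\Sigma^0_3$ (not $\Sigma^0_2$) in the relevant oracle, but this is immaterial since all that is needed is that the clause is arithmetical in parameters from $M$, hence absolute.
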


\subsection{Proof of Lemma \ref*{lem:kl}}

Let $M$, $\mathcal{I}$, $c$, $d$, $M'$, $I_d$, $i_0$, $i_1$ and~$j$ be as in Section \ref{sec:setup}. We will need the technical result below, which gives the most important application of the tree labeling method for our purposes.

\begin{lemma}\label{lem:row}
	Fix $I \in \mathcal{I}$ with $I \subseteq \LimSet$, and assume $T = T(n,\Delta,E,I)$ is well-founded. For each $\beta \in T^L$, and each $m \in \omega$, there is a $\gamma \in \omega^{<\omega}$ with the following properties:
	\begin{itemize}
	\item $\gamma = \lambda$ or $m \leq \min \ran(\gamma)$;
	\item $\beta * \gamma$ is a terminal node of $T^L$;
	\item $\ran(\gamma)$ is homogeneous and limit homogeneous for $d$ with color $j$.
	\end{itemize}
\end{lemma}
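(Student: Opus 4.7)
The plan is to descend in the well-founded labeled tree $T^L$, starting at $\beta$ and choosing successors one element at a time until a terminal node is reached. At each step, the chosen successor $x_k$ is required to satisfy $\lim_y d(x_k, y) = j$ and to be large enough that $d(x_i, x_k) = j$ for every previously chosen $x_i$. The constraint $x_0 \geq m$ is handled at the first step only. The key ingredient is that every non-terminal row of $T^L$, being an element of $\mathcal{I}$ sitting inside $\LimSet$, contains infinitely many elements of limit-value $j$, courtesy of Lemma~\ref{lem:infcolors}.

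More precisely, if $\beta$ is terminal in $T^L$, take $\gamma = \lambda$. Otherwise, build $\gamma$ inductively as $\seq{x_0, x_1, \ldots}$. Suppose $\gamma_k = \seq{x_0, \ldots, x_{k-1}}$ has been chosen so that $\beta * \gamma_k$ is a non-terminal node of $T^L$, $\ran(\gamma_k)$ is homogeneous for $d$ with color $j$, $\lim_y d(x_i, y) = j$ for each $i < k$, and (if $k \geq 1$) $x_0 \geq m$. By Lemma~\ref{lem:pruned}, the row $R_k$ below $\beta * \gamma_k$ in $T^L$ is infinite, and by Observation~\ref{obs:main} it belongs to $M$ and therefore to $\mathcal{I}$. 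Since $R_k \subseteq I \subseteq \LimSet$, Lemma~\ref{lem:infcolors} produces infinitely many $x \in R_k$ with $\lim_y d(x, y) = j$. For each $i < k$, fix a threshold $N_i$ with $d(x_i, y) = j$ for all $y \geq N_i$, and then pick $x_k \in R_k$ satisfying both $\lim_y d(x_k, y) = j$ and $x_k \geq \max(m, N_0, \ldots, N_{k-1})$. This choice forces $d(x_i, x_k) = j$ for each $i < k$, so homogeneity is preserved, and $x_0 \geq m$ in the first step.

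Termination follows because $T^L$ inherits well-foundedness from $T$, and the chain $\beta \subsetneq \beta * \gamma_1 \subsetneq \beta * \gamma_2 \subsetneq \cdots$ would otherwise yield an infinite path through $T^L$. When the process stops at some stage $K$, the sequence $\gamma := \gamma_K$ satisfies all three required properties. The main (and essentially only) obstacle is guaranteeing that a suitable $x_k$ is available at each stage — and this is exactly what Lemma~\ref{lem:infcolors} provides, via the preserved hypothesis that the current row is an infinite subset of $\LimSet$ belonging to $M$. Notably, the argument does not care which of the three structural cases in Lemma~\ref{lem:pruned} the node $\beta * \gamma_k$ falls into, since in all three cases the row is an infinite element of $\mathcal{I}$ contained in $I$.
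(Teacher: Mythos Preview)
Your proof is correct and follows essentially the same approach as the paper: both descend through the well-founded tree $T^L$ from $\beta$, at each stage using Observation~\ref{obs:main} to see that the row below the current node lies in $\mathcal{I}$ and is contained in $\LimSet$, then invoking Lemma~\ref{lem:infcolors} to find a successor with limit color $j$ that is large enough to preserve homogeneity and the bound $m$. The only cosmetic difference is that the paper tracks a single threshold $m'$ for all previously chosen elements at once, whereas you track individual thresholds $N_i$; these are equivalent.
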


\begin{proof}
	Fix $\beta \in T^L$. We define a sequence $\gamma_0 \preceq \gamma_1 \preceq \cdots$ such that for each $s$, $\beta * \gamma_s \in T^L$, $m \leq x$ for each $x \in \ran(\gamma_s)$, and $\ran(\gamma_s)$ is homogeneous and limit homogeneous for $d$ with color $j$. Furthermore, we ensure that if $\beta*\gamma_s$ is not terminal in $T^L$ then $\beta*\gamma_{s+1}$ is a successor of $\beta*\gamma_s$. Thus, since $T^L$ is well-founded, $\beta*\gamma_s$ must be terminal in $T^L$ for some $s$, and we can then take $\gamma = \gamma_s$. To construct the sequence, let $\gamma_0 = \lambda$, and suppose inductively that we have defined $\gamma_s$. If $\beta*\gamma_s$ is terminal in $T^L$, we are done. Otherwise, let $m'$ be large enough so that $d(x,y) = j$ for all $x \in \ran(\gamma_s)$ and all $y \geq m'$. By Observation \ref{obs:main}, the row below $\beta*\gamma_s$ belongs to $\mathcal{I}$, and it is an infinite subset of $I$ and hence of $\LimSet$. Thus, there must be infinitely many $x$ in this row such that $\lim_y d(x,y) = j$. In particular, we can choose some such $x$ with $x \geq \max\{m,m'\}$, and we let $\gamma_{s+1} = \gamma_s * x$.
\end{proof}

We can now prove Lemma \ref{lem:kl}, which completes the argument.

\begin{highlight}{Lemma \ref*{lem:kl}}
	Let $\Delta$ be a Turing functionals Then for each $i \in \{i_0, i_1\}$, one of the following holds:
	\begin{enumerate}
		\item $\Delta^H$ is not (the characteristic function of) an infinite set;
		\item there is a $w \in \omega$ such that $\Delta^H(w) \downarrow = 1$ and $c(w) = i$.
	\end{enumerate}
\end{highlight}

\begin{proof}
	Let $\Delta$ and $i \in \{i_0, i_1\}$ be given. Fix any $\mathbb{M}_{d,\LimSet,j}$ condition $(E, I)$. We seek an extension $(E', I')$ such that one of the following holds:
	\begin{enumerate}
		\item there is an $n \in \omega$ such that for all $F \subseteq I'$ and all $w \geq n$, $\Delta^{E' \cup F}(w) \simeq 0$;
		\item there is a $w \in \omega$ such that $\Delta^{E'}(w) \downarrow = 1$ and $c(w) = i$.
	\end{enumerate}

	Since $d \in M'$, also $\mathbb{M}_{d,\LimSet,j} \in M'$. And since also $c \in M'$ and $H$ is generic over $M'$, this suffices.
	
	\begin{highlight}{Construction}
	For each $n$, let $T_n = T(n, \Delta, E, I)$, and if $T_n$ is well-founded, let $T^L_n = T^L_n(n, \Delta,E,I)$. We consider the following cases.
	\end{highlight}
	
	\begin{highlight}{Case 1}
		There is an $n$ such that $T_n$ is not well-founded. 	
	\end{highlight}

	Let $P$ be any infinite path through $T_n$ in $M$, and let $I' = \ran(P)$. By Observation \ref{obs:main}, $I' \in \mathcal{I}$. We define $E' = E$, so that $(E', I')$ extends $(E, I)$. Clearly, this extension satisfies clause (1) above.
	
	\begin{highlight}{Case 2}
		For all $n$, $T_n$ is well-founded, and the root of $T^L_n$ has a numerical label~$w_n$. 
	\end{highlight}

	By Observation \ref{obs:main}, the set of all $w_n$ belongs to $M$, so by genericity of $c$, there is an $n$ such that $c(w_n) = i$. We apply Lemma \ref{lem:row} with $\beta = \lambda$ to obtain a terminal node $\gamma$ of $T^L_n$ such that $\ran(\gamma)$ is homogeneous and limit homogeneous for $d$ with color $j$. Then $\gamma$ is labeled by $w_n$ in $T^L_n$, so there is a finite $F \subseteq \ran(\gamma)$ such that $\Delta^{E \cup F}(w_n) \downarrow = 1$, and we define $E' = E \cup F$. (Since $F \subseteq I$, we have $E < F$. By Lemma \ref{lem:Tprops}, if $T^L_n$ consists of just the root node, then $\beta = \lambda$ and $F = \emptyset$.) Note that $E'$ is homogeneous and limit homogeneous for $d$ with color $j$. We choose $m > F$ such that $d(x,y) = j$ for all $x \in F$ and all $y \geq m$, and define $I' = \{x \in I: x > m\}$, so that $I' \in \mathcal{I}$. Now $(E', I')$ is an extension of $(E, I)$ satisfying clause (2).

	\begin{highlight}{Case 3}
		For some $n$, $T_n$ is well-founded, and the root of $T^L_n$ has label $\infty$. 
	\end{highlight}
	
	Let $\gamma$ be a terminal node of $T^L_n$ obtained by applying Lemma \ref{lem:row} with $\beta = \lambda$. Let $\alpha$ be an initial segment of $\gamma$ such that $\alpha$ has label $\infty$ and each successor of $\alpha$ in $T^L_n$ has numerical label. In particular, $\ran(\alpha)$ is homogeneous and limit homogeneous for $d$ with color $j$. By Observation \ref{obs:main}, the labeled row below $\alpha$ is in $M$, so there are infinitely many pairs $\seq{x,w}$ in this labeled row with $c(w) = i$. Let $m$ be such that $d(x,y) = j$ for all $x \in \ran(\alpha)$ and all $y \geq m$. 
	
	Let $f \in M$ be the increasing map defined by~$f(w) = x$ iff $\seq{x,w} \in \alpha$ and~$x \geq m$. In particular, $\ran(f) \in \mathcal{I}$ and $\ran(f) \subseteq I \subseteq I_d$. Since $I_d \in \mathcal{I}_{i,j}$, there is some~$w \in \dom(f)$ such that~$c(w) = i$
and~$\lim_y d(f(w), y) = j$. Let~$x = f(w)$. We have $\seq{x, w} \in \alpha$, $x \geq m$ and~$\lim_y d(x,y) = j$. We then apply Lemma \ref{lem:row} again, this time with $\beta = \alpha*x$, and find a $\delta$ such that $\delta = \lambda$ or $m \leq \min \ran(\delta)$, $\alpha*x*\delta$ is terminal in $T^L_n$, and $\ran(\delta)$ is homogeneous and limit homogeneous for $d$ with color $j$. Now $\alpha * x * \delta$ is labeled $w$ in $T^L_n$, and its range is homogeneous and limit homogeneous for $d$ with color $j$. We choose a finite $F \subseteq \ran(\alpha * x * \delta)$ such that $\Delta^{E \cup F}(w) \downarrow = 1$, and define $E' = E \cup F$. Let $m' > F$ be such that $d(z,y) = j$ for all $z \in \ran(F)$ and all $y \geq m'$, and define $I' = \{x \in I: x > m'\}$. Then $(E', I')$ extends $(E, I)$ and satisfies clause (2).
This completes the proof. \qedhere
\end{proof}

\bibliographystyle{plain}
\bibliography{Papers.bib}

\end{document}